\DeclareMathOperator{\im}{Im}
\DeclareMathOperator{\Ker}{Ker}
\renewcommand{\le}{\leqslant}
\renewcommand{\ge}{\geqslant}
\newtheorem{theorem}{Theorem}
\newtheorem{lemma}{Lemma}
\theoremstyle{definition}
\newtheorem{definition}{Definition}
\begin{document}
\title{Topological classification
of oriented cycles of linear
mappings\thanks{Printed in
\emph{Ukrainian Math. J.}, 2014. The
second author was supported in part by
the Foundation for Research Support of
the State of S\~ao Paulo (FAPESP),
grant 2012/18139-2.}}

\author{Tetiana Rybalkina
\qquad Vladimir V. Sergeichuk\\
rybalkina\_t@ukr.net\qquad
sergeich@imath.kiev.ua\\ Institute of
Mathematics, Tereshchenkivska 3, Kiev,
Ukraine }
\date{}

\maketitle

\begin{abstract}
We consider oriented cycles
\[
\xymatrix{
{\mathbb F^{m_1}}
\ar@{<-}@/_2pc/[rrrr]^{A_t}
\ar@{->}[r]^{A_1}&
\mathbb F^{m_2}\ar@{->}[r]^{A_2\ \ } &
{\ \dots\ }&{\mathbb F^{m_{t-1}}}
\ar@{<-}[l]_{A_{t-2}}
\ar@{->}[r]^{\ A_{t-1}\ }&{\mathbb F^{m_t}}}
\]
of linear mappings over $\mathbb
F=\mathbb C\text{ or }\mathbb R$, and
reduce the problem of their
classification up to homeomorphisms in
the spaces $\mathbb
F^{m_1},\dots,\mathbb F^{m_t}$ to the
case $t=1$, which was studied by N.H.
Kuiper and J.W. Robbin [Invent.
Math.~19 (no.\,2) (1973) 83--106] and
by other authors.
\medskip

\noindent \emph{Keywords}: Oriented
cycles of linear mappings; Topological
equivalence\\
\emph{2000 MSC}: 15A21; 37C15
\end{abstract}

\section{Introduction}

We consider the problem of topological
classification of oriented cycles of
linear mappings.

Let
\begin{equation}\label{jsttw}
\begin{split}
{\cal A}: \qquad\xymatrix{
{V_1}
\ar@{<-}@/_2pc/[rrrr]^{A_t}
\ar@{->}[r]^{A_1}&
{V_2}\ar@{->}[r]^{A_2\ \ } &
{\ \dots\ }&{{V_{t-1}}}
\ar@{<-}[l]_{A_{t-2}}
\ar@{->}[r]^{\ A_{t-1}\ }
&{{V_t}}}
\end{split}
\end{equation}
and
\begin{equation}\label{jtw}
\begin{split}
{\cal B}: \qquad\xymatrix{
{W_1}
\ar@{<-}@/_2pc/[rrrr]^{B_t}
\ar@{->}[r]^{ B_1}&
{W_2}\ar@{->}[r]^{B_2\ \ } &
{\ \dots\ }&{{W_{t-1}}}
\ar@{<-}[l]_{B_{t-2}}
\ar@{->}[r]^{\ B_{t-1}\ }
&{{W_t}}}
\end{split}
\end{equation}
be two oriented cycles of linear
mappings of the same \emph{length $t$}
over a field $\mathbb F$. We say that a
system $\varphi=\{\varphi_i:V_i\to
W_i\}_{i=1}^t$ of bijections
\emph{transforms $\cal A$ to $\cal B$}
if all squares in the diagram
\begin{equation}\label{kjlj}
\begin{split}
\xymatrix@R=35pt{
{V_1}
\ar@{->}[d]^{\varphi_1}
\ar@{<-}@/_1.5pc/[rrrr]^{A_t}
\ar@{->}[r]^{A_1}
   & {V_2}
\ar@{->}[d]^{\varphi_2}
\ar@{->}[r]^{A_2\ \ } & {\ \dots\
}
   &{V_{t-1}}
\ar@{->}[d]^{\varphi_{t-1}}
\ar@{<-}[l]_{A_{t-2}} \ar@{->}[r]^{\
A_{t-1}\ }
   &{V_t}
\ar@{->}[d]^{\varphi_t}   \\
W_1
\ar@{<-}@/_1.5pc/[rrrr]^{B_t}
\ar@{->}[r]^{ B_1}&
{W_2}\ar@{->}[r]^{B_2\ \ } &
{\ \dots\ }&{W_{t-1}}
\ar@{<-}[l]_{B_{t-2}}
\ar@{->}[r]^{\ B_{t-1}\ }
&{W_t}}
\end{split}
\end{equation}
are commutative; that is,
\begin{equation}\label{kye}
\varphi_2A_1=B_1\varphi_1,\ \ \dots,\ \
\varphi_tA_{t-1}=B_{t-1}\varphi_{t-1},
\ \ \varphi_1A_t=B_t\varphi_t.
\end{equation}

\begin{definition}\label{def}
Let $\cal A$ and $\cal B$ be cycles of
linear mappings of the form
\eqref{jsttw} and \eqref{jtw} over a
field $\mathbb F$.

\begin{itemize}
  \item[\rm(i)] $\cal A$ and $\cal
      B$ are \emph{isomorphic} if
      there exists a system of
      linear bijections that
      transforms $\cal A$ to $\cal
      B$.

  \item[\rm(ii)] $\cal A$ and $\cal
      B$  are \emph{topologically
      equivalent} if $\mathbb
      F=\mathbb C$ or $\mathbb R$,
\begin{equation*}\label{j8o}
V_i=\mathbb F^{m_i},\quad
W_i=\mathbb F^{n_i}\quad
\text{for all }i=1,\dots,t,
\end{equation*}
and there exists a system of
homeomorphisms\footnote{By
\cite[Corollary 19.10]{Bred} or
\cite[Section 11]{McCl},
$m_1=n_1,\dots,m_t=n_t$.} that
transforms $\cal A$ to $\cal B$.
\end{itemize}
\end{definition}

The {\it direct sum} of cycles
\eqref{jsttw} and \eqref{jtw} is the
cycle
\begin{equation*}\label{zjmtw}
\begin{split}
{\cal A}\oplus{\cal B}: \quad
\xymatrix@C=15mm{ {V_1\oplus W_1}
\ar@{<-}@/_2pc/[rrr]^{A_t\oplus B_t}
\ar@{->}[r]^{A_1\oplus B_1}&
{V_2\oplus W_2}
\ar@{->}[r]^{A_2\oplus B_2} & {\ \dots\
}
\ar@{->}[r]^{A_{t-1}\oplus B_{t-1}\ \
} &{{V_t\oplus W_t}}}
\end{split}
\end{equation*}

The vector $ \dim {\cal A}:=(\dim
V_1,\dots,\dim V_t) $ is the
\emph{dimension} of $\cal A$. A cycle
$\cal A$ is \emph{indecomposable} if
its dimension is nonzero and $\cal A$
cannot be decomposed into a direct sum
of cycles of smaller dimensions.

A cycle $\cal A$ is \emph{regular} if
all $A_1,\dots,A_t$ are bijections, and
\emph{singular} otherwise. Each cycle
$\cal A$ possesses a \emph{regularizing
decomposition}
\begin{equation}\label{pyj}
\mathcal A=\mathcal A_{\mathrm{reg}}\oplus \mathcal A_1
\oplus\dots\oplus\mathcal A_r,
\end{equation}
in which $\mathcal A_{\mathrm{reg}}$ is
regular and all $\mathcal
A_1,\dots,\mathcal A_r$ are
indecomposable singular. An algorithm
that constructs a regularizing
decomposition of a nonoriented cycle of
linear mappings over $\mathbb C$ and
uses only unitary transformations was
given in \cite{ser-cycle}.

The following theorem  reduces the
problem of topological classification
of oriented cycles of linear mappings
to the problem of topological
classification of linear operators.

\begin{theorem}\label{yrw}
{\rm(a)} Let $\mathbb F=\mathbb C$ or
$\mathbb R$, and let
\begin{equation}\label{ktw}
\begin{split}
{\cal A}: \qquad\xymatrix{
{\mathbb F^{m_1}}
\ar@{<-}@/_2pc/[rrrr]^{A_t}
\ar@{->}[r]^{A_1}&
{\mathbb F^{m_2}}\ar@{->}[r]^{A_2\ \ } &
{\ \dots\ }&{{\mathbb F^{m_{t-1}}}}
\ar@{<-}[l]_{A_{t-2}}
\ar@{->}[r]^{\ A_{t-1}\ }
&{\mathbb F^{m_t}}}
\end{split}
\end{equation}
and
\begin{equation}\label{kjtw}
\begin{split}
{\cal B}: \qquad\xymatrix{
\mathbb F^{n_1}
\ar@{<-}@/_2pc/[rrrr]^{B_t}
\ar@{->}[r]^{ B_1}&
{\mathbb F^{n_2}}\ar@{->}[r]^{B_2\ \ } &
{\ \dots\ }&{\mathbb F^{n_{t-1}}}
\ar@{<-}[l]_{B_{t-2}}
\ar@{->}[r]^{\ B_{t-1}\ }
&{\mathbb F^{n_t}}}
\end{split}
\end{equation}
be topologically equivalent. Let
\begin{equation}\label{groj}
\mathcal A=\mathcal A_{\mathrm{reg}}\oplus \mathcal A_1
\oplus\dots\oplus\mathcal A_r,
\qquad \mathcal B=\mathcal B_{\mathrm{reg}}\oplus
\mathcal B_1
\oplus\dots\oplus\mathcal B_s
\end{equation}
be their regularizing decompositions.
Then their regular parts $\mathcal
A_{\mathrm{reg}}$ and $\mathcal
B_{\mathrm{reg}}$ are topologically
equivalent, $r=s$, and after a suitable
renumbering their indecomposable
singular summands $\mathcal A_i$ and
$\mathcal B_i$ are isomorphic for all
$i=1,\dots,r$.

{\rm(b)} Each regular cycle $\cal A$ of
the form \eqref{ktw} is isomorphic to
the cycle
\begin{equation}\label{tmtw}
\begin{split}
{\cal A}': \qquad\xymatrix{
{\mathbb F^{m_1}}
\ar@{<-}@/_2pc/[rrrr]^{A_t\cdots A_2A_1}
\ar@{->}[r]^{\mathbbm 1}&
{\mathbb F^{m_2}}\ar@{->}[r]^{\mathbbm 1\ \ } &
{\ \dots\ }&{\mathbb F^{m_{t-1}}}
\ar@{<-}[l]_{\mathbbm 1}
\ar@{->}[r]^{\ \mathbbm 1\ }
&{\mathbb F^{m_t}}}
\end{split}
\end{equation}
If cycles \eqref{ktw} and \eqref{kjtw}
are regular, then they are
topologically equivalent if and only if
the linear operators ${
{A}_t\cdots{A}_2{A}_1}$ and ${
{B}_t\cdots{B}_2{B}_1}$ are
topologically equivalent $($as the
cycles ${{\mathbb
F^{m_1}}\!\!\righttoleftarrow
\!
{A}_t\cdots{A}_2{A}_1}$ and ${{\mathbb
F^{n_1}}\!\!\righttoleftarrow\!
{B}_t\cdots{B}_2{B}_1}$ of length $1)$.
\end{theorem}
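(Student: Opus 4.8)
The plan is to settle (b) by an explicit construction and to reduce (a) to tracking images of compositions around the cycle via a single-operator reformulation.

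\emph{Part (b).} To realize a regular cycle $\mathcal A$ as isomorphic to $\mathcal A'$, I take $\varphi_1=\mathbbm 1$ and $\varphi_i=(A_{i-1}\cdots A_1)^{-1}$ for $i=2,\dots,t$; commutativity of all squares in \eqref{kjlj} is then a one-line check, the closing square using $A_t=A_t(A_{t-1}\cdots A_1)(A_{t-1}\cdots A_1)^{-1}$. For the equivalence statement the forward direction is immediate: composing the relations \eqref{kye} for a transforming system of homeomorphisms $\{\varphi_i\}$ gives $\varphi_1(A_t\cdots A_1)=(B_t\cdots B_1)\varphi_1$, so $\varphi_1$ is a topological conjugacy. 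For the converse, replace $\mathcal A$ and $\mathcal B$ by the isomorphic standard cycles $\mathcal A'$ and $\mathcal B'$ (linear isomorphisms are homeomorphisms, so topological equivalence is unaffected); then any homeomorphism $h$ with $h(A_t\cdots A_1)=(B_t\cdots B_1)h$ produces the transforming system $\varphi_1=\dots=\varphi_t=h$, while $m_1=\dots=m_t$ and $n_1=\dots=n_t$ because all maps are bijective and $m_1=n_1$ because $h$ exists.

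\emph{Part (a).} Encode $\mathcal A$ by the single linear operator $N_{\mathcal A}$ on $V:=\mathbb F^{m_1}\oplus\dots\oplus\mathbb F^{m_t}$ that sends the $i$th summand $V_i$ into $V_{i+1}$ by $A_i$ (indices modulo $t$). A system of homeomorphisms transforming $\mathcal A$ to $\mathcal B$ is then the same as a homeomorphism $\Phi\colon V\to W$ with $\Phi(V_i)=W_i$ for all $i$ and $\Phi N_{\mathcal A}=N_{\mathcal B}\Phi$, the transforming system being $\varphi_i:=\Phi|_{V_i}$. From $\Phi N_{\mathcal A}^k=N_{\mathcal B}^k\Phi$ and bijectivity of $\Phi$ one gets $\Phi(\operatorname{im}N_{\mathcal A}^k)=\operatorname{im}N_{\mathcal B}^k$, so $\Phi$ maps the linear subspace $\operatorname{im}N_{\mathcal A}^k\cap V_i$ homeomorphically onto $\operatorname{im}N_{\mathcal B}^k\cap W_i$; since homeomorphic real (resp.\ complex) vector spaces have equal dimension, this gives $\operatorname{rank}(A_{i-1}\cdots A_{i-k})=\operatorname{rank}(B_{i-1}\cdots B_{i-k})$ for all $i$ and all $k\ge 0$, that is, all ranks of windows of consecutive maps around the cycle are topological invariants. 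Applying the same to $\bigcap_k\operatorname{im}N_{\mathcal A}^k$, which by Fitting's lemma is a graded $N_{\mathcal A}$-invariant subspace on which $N_{\mathcal A}$ is bijective --- so that the regular cycle it carries is (a copy of) the summand $\mathcal A_{\mathrm{reg}}$ of a regularizing decomposition --- and observing that $\Phi$ maps it onto the corresponding subspace for $\mathcal B$, I obtain a graded homeomorphism that is a topological equivalence between $\mathcal A_{\mathrm{reg}}$ and $\mathcal B_{\mathrm{reg}}$.

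For the singular summands: on a graded complement of $\bigcap_k\operatorname{im}N_{\mathcal A}^k$ the operator $N_{\mathcal A}$ is graded nilpotent, hence a direct sum of cyclic pieces on homogeneous generators; these are exactly the indecomposable singular cycles, classified up to isomorphism by a starting vertex modulo $t$ and a length $\ell\ge 1$, so they form a discrete family, unlike the regular indecomposables. A direct computation recovers the multiplicity of each type from the window ranks $\rho(i,k):=\operatorname{rank}(A_{i-1}\cdots A_{i-k})$: for example $\rho(i,k)-\rho(i+1,k+1)$ counts the indecomposable singular summands whose ``top'' is at vertex $i$ and whose length is at least $k+1$, the regular part contributing a constant that cancels, so a further difference isolates the length. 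Since the $\rho(i,k)$ agree for $\mathcal A$ and $\mathcal B$, their singular parts have the same multiset of types, whence $r=s$ and, after renumbering, $\mathcal A_i\cong\mathcal B_i$ by Krull--Schmidt. The steps that need most care are pinning down the classification of indecomposable cycles (regular ones versus ``strings''), checking that $\bigcap_k\operatorname{im}N_{\mathcal A}^k$ and a graded complement reproduce the regularizing decomposition with its Krull--Schmidt uniqueness, and carrying out the rank bookkeeping cleanly through the wrap-arounds.
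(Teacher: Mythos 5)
Your argument is in substance the paper's: part (b) coincides with it (an explicit linear isomorphism onto the standard cycle \eqref{tmtw}, composing the relations \eqref{kye} for one direction, and transporting a single conjugacy through the standard cycles for the converse), and in part (a) your invariants --- the ranks of the windows $A_{[i-1]}\cdots A_{[i-k]}$, the stable image as the regular part, and the difference formula recovering the string multiplicities --- are exactly the kernel-dimension invariants $k_{ij}$, Lemma~\ref{yrwe}(a), and formula \eqref{byt} of the paper, repackaged through the single graded operator $N_{\mathcal A}$; your appeal to Krull--Schmidt in place of the paper's direct count is harmless. One step, however, is wrong as stated: a transforming system of homeomorphisms does not give a homeomorphism $\Phi$ with $\Phi(V_i)=W_i$, because the $\varphi_j$ need not fix the origin; for $\Phi=\varphi_1\times\dots\times\varphi_t$ one has $\Phi(0,\dots,v_i,\dots,0)=(\varphi_1(0),\dots,\varphi_i(v_i),\dots,\varphi_t(0))$, which lies in $W_i$ only when every $\varphi_j(0)=0$ (for instance, translations conjugate the identity cycle to itself). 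The gap is local and easily closed: what you actually need is $\Phi(\im N_{\mathcal A}^k)=\im N_{\mathcal B}^k$, and since $\Phi$ acts coordinatewise and $\im N_{\mathcal A}^k$ is the product over $i$ of the window images, this already gives $\varphi_i\bigl(\im (A_{[i-1]}\cdots A_{[i-k]})\bigr)=\im (B_{[i-1]}\cdots B_{[i-k]})$ --- precisely what the paper reads off from the individual commutative squares \eqref{iug} --- and equality of dimensions of homeomorphic subspaces then yields both the invariance of the window ranks and, passing to the stable images, the topological equivalence of $\mathcal A_{\mathrm{reg}}$ and $\mathcal B_{\mathrm{reg}}$. (Also, your sample counting identity is correct if the ``length'' of a string means its number of basis vectors; with the paper's convention, length $=q-p$, the difference $\rho(i,k)-\rho(i+1,k+1)$ counts strings ending at vertex $i$ of length at least $k$, so fix the convention before the bookkeeping.)
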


Kuiper and Robbin~\cite{Robb,Kuip-Robb} gave a criterion for topological
equivalence of linear operators over
$\mathbb R$ without eigenvalues that
are roots of~$1$. Budnitska
\cite[Theorem 2.2]{bud1} found a
canonical form with respect to
topological equivalence of linear
operators over $\mathbb R$ and $\mathbb
C$ without eigenvalues that are roots
of $1$. The problem of topological
classification of linear operators with
an eigenvalue that is a root of $1$ was
studied by Kuiper and
Robbin~\cite{Robb,Kuip-Robb}, Cappell
and Shaneson~\cite{Capp-conexamp,
Capp-2th-nas-n<=6,Capp-big-n<6,Cap+sha,Cap+ste}, and
Hsiang and Pardon~\cite{Pardon}. The
problem of topological classification
of affine operators was studied in
\cite{bud1,Ephr,Blanc,bud,bud+bud}. The
topological classifications of pairs of
counter mappins $V_1\,\begin{matrix}
      \longrightarrow \\[-4.1mm]
      \longleftarrow
    \end{matrix}\,V_2$ (i.e.,
oriented cycles of length $2$) and of
chains of linear mappings were given in
\cite{ryb_new} and \cite{ryb+ser}.

%%%%%%%%%%%%%%%%%%%%%%%%%%%%%

\section{Oriented cycles of linear
mappings up to isomorphism}\label{jdt}

This section is not topological; we
construct a regularizing decomposition
of an oriented cycle of linear mappings
over an arbitrary field $\mathbb F$.

A classification of cycles of length 1
(i.e., linear operators $
 {V}\!\righttoleftarrow$) over any field is given
by the Frobenius canonical form of a
square matrix under similarity. The
oriented cycles of length 2 (i.e.,
pairs of counter mappins
$V_1\,\begin{matrix}
      \longrightarrow \\[-4.1mm]
      \longleftarrow
    \end{matrix}\,V_2$) are classified in
\cite{dob+pon,hor+mer}. The
classification of cycles of arbitrary
length and with arbitrary
orientation of its arrows is well known
in the theory of representations of
quivers; see \cite[Section
11.1]{gab+roi}.

For each $c\in \mathbb Z$, we denote by
$[c]$ the natural number such that
\begin{equation*}\label{jit}
1\le [c]\le t,\qquad [c]\equiv c\bmod t.
\end{equation*}

By the Jordan theorem, for each
indecomposable singular cycle $
 {V}\!\righttoleftarrow
 {\! A}$
there exists a basis $e_1,\dots,e_n$ of
$V$ in which the matrix of $A$ is a
singular Jordan block. This means that
the basis vectors form a {\it Jordan
chain}
\[
e_1\xrightarrow{A} e_{2}
\xrightarrow{A}e_{3}
\xrightarrow{A}\cdots
\xrightarrow{A}e_n
\xrightarrow{A}0.
\]

In the same manner, each indecomposable
singular cycle $\cal A$ of an arbitrary
length $t$ also can be given by a chain
\begin{equation*}\label{griw}
e_p\xrightarrow{A_p} e_{p+1}
\xrightarrow{A_{[p+1]}}e_{p+2}
\xrightarrow{A_{[p+2]}}\cdots
\xrightarrow{A_{[q-1]}}e_q
\xrightarrow{A_{[q]}}0
\end{equation*}
in which $1\le p\le q\le t$ and for
each $l=1,2,\dots,t$ the set
$\{e_i|i\equiv l\bmod t\}$ is a basis
of $V_l$; see \cite[Section
11.1]{gab+roi}. We say that this chain
\emph{ends in $V_{[q]}$} since $e_q\in
V_{[q]}$. The number $q-p$ is called
the \emph{length} of the chain.

For example, the chain
\[
\xymatrix@R=7pt@C=11pt{
&&&e_4\ar[r]&e_5
\ar[dllll]\\
e_6\ar[r]&e_7
\ar[r]&e_8\ar[r]&e_9\ar[r]&e_{10}
\ar[dllll]\\
e_{11}\ar[r]&e_{12}
\ar[r]&0}
\]
of length 8 gives an indecomposable
singular cycle on the spaces $
V_1=\mathbb Fe_6\oplus\mathbb Fe_{11}$,
$V_2=\mathbb Fe_7\oplus\mathbb
Fe_{12},$ $V_3=\mathbb Fe_8$,
$V_4=\mathbb Fe_4\oplus\mathbb Fe_{9},$
$V_5=\mathbb Fe_5\oplus\mathbb
Fe_{10}$.

\begin{lemma}\label{yrwe}
Let
\begin{equation*}\label{js7w}
\begin{split}
{\cal A}: \qquad\xymatrix{
{V_1}
\ar@{<-}@/_2pc/[rrrr]^{A_t}
\ar@{->}[r]^{A_1}&
{V_2}\ar@{->}[r]^{A_2\ \ } &
{\ \dots\ }&{{V_{t-1}}}
\ar@{<-}[l]_{A_{t-2}}
\ar@{->}[r]^{\ A_{t-1}\ }
&{{V_t}}}
\end{split}
\end{equation*}
be an oriented cycle of linear
mappings, and let \eqref{pyj} be its
regularizing decomposition.

\begin{itemize}
  \item[\rm(a)] Write
\begin{equation}\label{jyf}
\hat{A}_i:=A_{[i+t-1]}\cdots A_{[i+1]}A_i:
V_i\to V_i
\end{equation}
and fix a natural number $z$ such
that
      \[
\tilde V_i:=
\hat A_i^zV_i=
\hat A_i^{z+1}V_i\quad\text{for all $i=1,\dots,t$.}
      \]
Let
\begin{equation*}\label{jss}
\begin{split}
\tilde{\cal A}: \qquad\xymatrix{
{\tilde V_1}
\ar@{<-}@/_2pc/[rrrr]^{\tilde A_t}
\ar@{->}[r]^{\tilde A_1}&
{\tilde V_2}\ar@{->}[r]^{\tilde A_2\ \ } &
{\ \dots\ }&{{\tilde V_{t-1}}}
\ar@{<-}[l]_{\tilde A_{t-2}}
\ar@{->}[r]^{\ \tilde A_{t-1}\ }
&{{\tilde V_t}}}
\end{split}
\end{equation*}
be the cycle formed by the
restrictions $\tilde A_i:\tilde
V_i\to\tilde V_{[i+1]}$ of
$A_i:V_i\to V_{[i+1]}$. Then ${\cal
A}_{\mathrm{reg}}=\tilde {\cal A}$
$($and so the regular part is
uniquely determined by $\cal A)$.

  \item[\rm(b)] The numbers
\[
k_{ij}:=\dim\Ker
      (A_{[i+j]}\cdots
      A_{[i+1]}A_i),\quad\text{
      $i=1,\dots,t$ and $j\ge 0$},
\]
determine the singular summands
$\mathcal A_1,\dots,\mathcal A_r$
of regularizing decomposition
\eqref{pyj} up to isomorphism since
the number $n_{lj}$ $(l=1,\dots,t$
and $j\ge 0)$ of singular summands
given by chains of length $j$ that
end in $V_l$ can be calculated by
the formula
\begin{equation}\label{byt}
n_{lj}=
k_{[l-j],j}-k_{[l-j],j-1}-
k_{[l-j-1],j+1}+k_{[l-j-1],j}
\end{equation}
in which $k_{i,-1}:=0$.
\end{itemize}
\end{lemma}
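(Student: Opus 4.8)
The plan is to handle the two parts separately, in each case using that all the maps respect the direct-sum decomposition so that the relevant invariant splits as a sum over the indecomposable summands.

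\emph{Part (a).} Such a $z$ exists because each filtration $V_i\supseteq\hat A_iV_i\supseteq\hat A_i^2V_i\supseteq\cdots$ must stabilise and a common $z$ then works for all $i$. Fix a regularizing decomposition $\mathcal A=\mathcal A_{\mathrm{reg}}\oplus\mathcal A_1\oplus\dots\oplus\mathcal A_r$. Each summand is a subcycle, so every $A_i$ maps the $i$th component of a summand into the $[i+1]$st component of the \emph{same} summand; hence $\hat A_i$ of \eqref{jyf} preserves the decomposition $V_i=(\mathcal A_{\mathrm{reg}})_i\oplus\bigoplus_k(\mathcal A_k)_i$ and restricts on each block to that block's own ``go around'' operator. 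On the regular part all $A_i$ are bijections, so $\hat A_i$ restricts to a bijection of $(\mathcal A_{\mathrm{reg}})_i$ and $\hat A_i^z(\mathcal A_{\mathrm{reg}})_i=(\mathcal A_{\mathrm{reg}})_i$ for every $z$. On a singular summand $\mathcal A_k$, given by a chain $e_p\to\cdots\to e_q\to 0$ with $e_m\in V_{[m]}$, the operator $\hat A_i$ sends a basis vector $e_m$ with $[m]=i$ to $e_{m+t}$ if $m+t\le q$ and to $0$ otherwise, hence is nilpotent on $(\mathcal A_k)_i$; so the image filtration $(\mathcal A_k)_i\supseteq\hat A_i(\mathcal A_k)_i\supseteq\cdots$ strictly decreases until it reaches $0$. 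Comparing dimensions blockwise in the equality $\hat A_i^zV_i=\hat A_i^{z+1}V_i$ (a direct sum of the corresponding equalities for the blocks) forces $\hat A_i^z(\mathcal A_k)_i=\hat A_i^{z+1}(\mathcal A_k)_i$, hence $\hat A_i^z(\mathcal A_k)_i=0$, for each $k$. Therefore $\tilde V_i=\hat A_i^zV_i=(\mathcal A_{\mathrm{reg}})_i$, and $\tilde A_i$, being the restriction of $A_i$ to this subspace, is exactly the $i$th map of $\mathcal A_{\mathrm{reg}}$; thus $\tilde{\mathcal A}=\mathcal A_{\mathrm{reg}}$ as subcycles of $\cal A$. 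Since $\tilde{\mathcal A}$ is built from $\cal A$ without any choices, the regular part does not depend on the chosen regularizing decomposition.

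\emph{Part (b).} The number $k_{ij}$ is $\dim\Ker$ of the path map $A_{[i+j]}\cdots A_{[i+1]}A_i\colon V_i\to V_{[i+j+1]}$, and the kernel of a block-diagonal map is the direct sum of the blockwise kernels, so $k_{ij}(\mathcal A)=k_{ij}(\mathcal A_{\mathrm{reg}})+\sum_k k_{ij}(\mathcal A_k)$; moreover $k_{ij}(\mathcal A_{\mathrm{reg}})=0$ since on the regular part this path map is a composition of bijections. For a single chain $\mathcal C\colon e_p\to\cdots\to e_q\to 0$ of length $d=q-p$ ending in $V_{l'}$ (so $l'=[q]$), the path map of length $j+1$ starting at $V_i$ sends $e_m$ with $[m]=i$ to $e_{m+j+1}$ if $m+j+1\le q$ and to $0$ otherwise, so with the substitution $b=q-m$ one gets
\[
k_{ij}(\mathcal C)=\#\{\,b : 0\le b\le\min(d,j),\ b\equiv l'-i\bmod t\,\}.
\]

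It then remains to substitute this into the right-hand side of \eqref{byt} and run a short case analysis. Comparing the first two terms, $k_{[l-j],j}(\mathcal C)-k_{[l-j],j-1}(\mathcal C)$ equals $1$ if $l'=l$ and $d\ge j$ and equals $0$ otherwise, because the interval $(\min(d,j-1),\min(d,j)]$ is $\{j\}$ when $d\ge j$ and empty when $d<j$, and $j$ lies in the required residue class precisely when $l'\equiv l\bmod t$; similarly $k_{[l-j-1],j+1}(\mathcal C)-k_{[l-j-1],j}(\mathcal C)$ equals $1$ if $l'=l$ and $d\ge j+1$ and $0$ otherwise. Hence the full alternating sum in \eqref{byt} receives the contribution $1$ from a chain exactly when it has length $j$ and ends in $V_l$, and $0$ otherwise; summing over all summands (the regular part contributing nothing) gives $n_{lj}$, which proves \eqref{byt}. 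Finally, since the isomorphism type of an indecomposable singular summand is determined by its length together with the space $V_l$ in which its chain ends (see \cite[Section 11.1]{gab+roi}), the numbers $n_{lj}$, and therefore the $k_{ij}$, determine $\mathcal A_1,\dots,\mathcal A_r$ up to isomorphism and reordering. The only genuine work is the index bookkeeping with $[\,\cdot\,]$ in the chain computation and the case analysis that collapses the alternating sum; everything else is additivity of $\dim\Ker$ and the bijective/nilpotent dichotomy on the summands.
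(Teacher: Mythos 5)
Your proof is correct and follows essentially the same route as the paper: in (a) the same blockwise bijective/nilpotent argument showing $\hat A_i^z V_i=(\mathcal A_{\mathrm{reg}})_i$, and in (b) the same count of chain basis vectors in the kernels followed by double differencing. The only difference is organizational: the paper aggregates the count into the intermediate quantities $\sigma_{ij}=n_{ij}+n_{i,j+1}+\cdots$ and inverts, whereas you compute each chain's contribution to $k_{ij}$ explicitly and telescope per summand, which amounts to the same bookkeeping.
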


\begin{proof}
(a) Let \eqref{pyj} be a regularizing
decomposition of $\cal A$. Let
\[V_i=V_{i,{\mathrm{reg}}}\oplus V_{i1}
\oplus\dots\oplus V_{ir},\qquad
i=1,\dots,t,\] be the corresponding
decompositions of its vector spaces.
Then $\hat
A_i^zV_{i,\text{reg}}=V_{i,\text{reg}}$
(since all linear mappings in ${\cal
A}_{\mathrm{reg}}$ are bijections) and
$\hat A_i^zV_{i1}=\dots=\hat
A_i^zV_{ir}=0$. Hence
$V_{i,\text{reg}}=\tilde V_i$, and so
${\cal A}_{\mathrm{reg}}=\tilde {\cal
A}$.

(b) Denote by
\[\sigma_{ij}:=n_{ij}+n_{i,j+1}
+n_{i,j+2}+\cdots\] the number of
chains of length $\ge j$ that end in
$V_i$. Clearly, $k_{i0}=\sigma _{i0}$,
 $k_{i1}=\sigma _{i0}+\sigma
 _{[i+1],1}$, \dots, and
\begin{equation*}\label{fseo}
k_{ij}=\sigma _{i0}+\sigma _{[i+1],1}+
\dots+\sigma _{[i+j],j}
\end{equation*}
for each $1\le i\le t$ and $j\ge 0$.
Therefore,
\[
\sigma _{lj}=k_{ij}-k_{i,j-1},\qquad
l:=[i+j]
\]
(recall that $k_{i,-1}=0$).
 This means that $l\equiv i+j\bmod t$,
$i\equiv l-j\bmod t$, $i=[l-j]$,  and
so
\[
\sigma _{lj}=k_{[l-j],j}-k_{[l-j],j-1}.
\]
We get
\[
n_{lj}=
\sigma _{lj}-\sigma _{l,j+1}=
k_{[l-j],j}-k_{[l-j],j-1}-
k_{[l-j-1],j+1}+k_{[l-j-1],j}.
\tag*{\qedhere}
\]
\end{proof}

\section{Proof of Theorem
\ref{yrw}}

In this section, $\mathbb F=\mathbb C$
or $\mathbb R$.

 (a) Let $\cal A$ and $\cal B$ be
cycles \eqref{ktw} and \eqref{kjtw}.
Let them be topologically equivalent;
that is, $\cal A$ is transformed to $B$
by a system $\{\varphi_i:\mathbb
F^{m_i}\to \mathbb F^{n_i}\}_{i=1}^t$
of homeomorphisms. Let \eqref{groj} be
regularizing decompositions of $\cal A$
and $\cal B$.

First we prove that their regular parts
$\mathcal A_{\mathrm{reg}}$ and
$\mathcal B_{\mathrm{reg}}$ are
topologically equivalent. In notation
\eqref{jyf},
\[\hat{A}_i=A_{[i+t-1]}\cdots
A_{[i+1]}A_i,\quad
\hat{B}_i=B_{[i+t-1]}\cdots
B_{[i+1]}B_i.\] Let $z$ be a
natural number that satisfies both
$\hat A_i^z\mathbb F^{m_i}= \hat
A_i^{z+1}\mathbb F^{m_i}$ and $\hat
B_i^z\mathbb F^{n_i}= \hat
B_i^{z+1}\mathbb F^{n_i}$ for all
$i=1,\dots,t$.
 By
\eqref{kjlj}, the diagram
\[
\xymatrix{ \mathbb
F^{m_i}\ar@{->}^{\hat A^z}[r]
\ar[d]_{\varphi_i}
 &
\mathbb F^{m_i} \ar[d]^{\varphi_i}
           \\
\mathbb F^{n_i}\ar@{->}^{\hat B^z}[r]
 &
\mathbb F^{n_i}
 }
\]
is commutative. Then $\varphi_i\im\hat
A_i^z=\im\hat B_i^z$ for all $i$.
Therefore, the restriction $\hat
\varphi_i:\im\hat A_i^z\to\im\hat
B_i^z$ is a homeomorphism. The system
of homeomorphisms $\hat
\varphi_1,\dots,\hat \varphi_t$
transforms $\tilde {\cal A}$ to $\tilde
{\cal B}$, which are the regular parts
of ${\cal A}$ and ${\cal B}$ by Lemma
\ref{yrwe}(a).

Let us prove that $r=s$, and, after a
suitable renumbering, $\mathcal A_i$
and $\mathcal B_i$ are isomorphic for
all $i=1,\dots,r$. Since all summands
$\mathcal A_i$ and $\mathcal B_i$ with
$i\ge 1$ can be given by chains of
basic vectors, it suffices to prove
that $n_{ij}=n'_{ij}$ for all $i$ and
$j$, where $n'_{ij}$ is the number of
singular summands $\mathcal
B_1,\dots,\mathcal B_s$ in \eqref{groj}
given by chains of length $j$ that end
in the $i$th space $\mathbb F^{n_i}$.

Due to \eqref{byt}, it suffices to
prove that the numbers $k_{ij}$ are
invariant with respect to topological
equivalence.

In the same manner as $k_{ij}$ is
constructed by $\cal A$, we construct
$k_{ij}'$ by $\cal B$. Let us fix $i$
and $j$ and prove that
$k_{ij}=k_{ij}'$. Write
\[
A:=A_{[i+j]}\cdots
      A_{[i+1]}A_i,\quad
      B:=B_{[i+j]}\cdots
      B_{[i+1]}B_i,\qquad q:=[i+j+1]
\]
and consider the commutative diagram
\begin{equation}\label{iug}
\begin{split}
\xymatrix{ \mathbb
F^{m_i}\ar@{->}^{A}[r]
\ar[d]_{\varphi_i}
 &
\mathbb F^{m_q} \ar[d]^{\varphi_q}
           \\
\mathbb F^{n_i}\ar@{->}^{B}[r]
 &
\mathbb F^{n_q}
 }
\end{split}
\end{equation}
which is a fragment of \eqref{kjlj}. We
have
\begin{equation*}\label{org}
k_{ij}=\dim\Ker A=m_i-\dim\im A,\quad
k'_{ij}=n_i-\dim\im B.
\end{equation*}
Because $\varphi_i:\mathbb F^{m_i}\to
\mathbb F^{n_i}$ is a homeomorphism,
$m_i=n_i$ (see \cite[Corollary
19.10]{Bred} or \cite[Section
11]{McCl}). Since the diagram
\eqref{iug} is commutative,
$\varphi_q(\im A)=\im B$. Hence, the
vector spaces $\im A$ and $\im B$ are
homeomorphic, and so $\dim\im A=\dim\im
B$, which proves $k_{ij}=k_{ij}'$.

(b) Each regular cycle $\cal A$ of the
form \eqref{ktw} is isomorphic to the
cycle ${\cal A}'$ of the form
\eqref{tmtw} since the diagram
\begin{equation}\label{iej}
\begin{split}
\xymatrix@R=60pt{ {\mathbb F^{m_1}}
\ar@{->}[d]^{\mathbbm
1} \ar@{<-}@/_2pc/[rrrrr]^{A_t\cdots
A_2A_1} \ar@{->}[r]^{\mathbbm 1}
   & {\mathbb F^{m_2}}
\ar@{->}[d]^{A_1}
\ar@{->}[r]^{\mathbbm 1}
&{\mathbb F^{m_3}}\ar@{->}[d]^{A_2A_1}
   &{\mathbb F^{m_4}}
\ar@{->}[d]^{A_3A_2A_1}
\ar@{<-}[l]_{\mathbbm 1}
\ar@{->}[r]^{\ \mathbbm 1\ }
& {\ \dots\
}\ar@{->}[r]^{\ \ \mathbbm 1}
   &{\mathbb F^{m_t}}
\ar@{->}[d]^{A_{t-1}
\cdots A_2A_1}   \\
\mathbb F^{m_1}
\ar@{<-}@/_2pc/[rrrrr]^{A_t}
\ar@{->}[r]^{ A_1}&
{\mathbb F^{m_2}}\ar@{->}[r]^{A_2}
&{\mathbb F^{m_3}}
&{\mathbb F^{m_4}}
\ar@{<-}[l]_{A_3}
\ar@{->}[r]^{\ A_4\ }
& {\ \dots\
}\ar@{->}[r]^{\ A_{t-1}\ }
   &{\mathbb F^{m_t}}}
\end{split}
\end{equation}
is commutative.

Let $\cal A$ and $\cal B$ be regular
cycles of the form \eqref{ktw} and
\eqref{kjtw}. Let them be topologically
equivalent; that is, $\cal A$ is
transformed to $B$ by a system
$\varphi=(\varphi_1,\dots,\varphi_t)$
of homeomorphisms; see \eqref{kjlj}. By
\eqref{kye},
\begin{align*}
\varphi_1A_{t}A_{t-1}\cdots A_{1}
&=B_{t}\varphi_tA_{t-1}\cdots A_{1}
\\&=B_{t}B_{t-1}\varphi_{t-1}A_{t-2}\cdots
A_{1} =\dots=B_{t}B_{t-1}\cdots
B_{1}\varphi_1,
\end{align*}
and so the cycles ${{\mathbb
F^{m_1}}\!\!\righttoleftarrow\!
{A}_t\cdots{A}_2{A}_1}$ and ${{\mathbb
F^{m_1}}\!\!\righttoleftarrow\!
{B}_t\cdots{B}_2{B}_1}$ are
topologically equivalent via $\varphi
_1$.

Conversely, let ${{\mathbb
F^{m_1}}\!\!\righttoleftarrow\!
{A}_t\cdots{A}_2{A}_1}$ and ${{\mathbb
F^{m_1}}\!\!\righttoleftarrow\!
{B}_t\cdots{B}_2{B}_1}$ be
topologically equivalent via some
homeomorphism $\varphi _1$, and let
${\cal A}'$ and ${\cal B}'$ be
constructed by ${\cal A}$ and ${\cal
B}$ as in \eqref{tmtw}. Then ${\cal
A}'$ and ${\cal B}'$ are topologically
equivalent via the system of
homeomorphisms
$\varphi=(\varphi_1,\varphi_1,\dots,\varphi_1)$.
Let $\varepsilon$ and $\delta$ be
systems of linear bijections that
transform ${\cal A}'$ to ${\cal A}$ and
${\cal B}'$ to ${\cal B}$; see
\eqref{iej}. Then ${\cal A}$ and ${\cal
B}$ are topologically equivalent via
the system of homeomorphisms $\delta
\varphi \varepsilon ^{-1}$.

\end{document}